\theoremstyle{plain}
\newtheorem{theorem}{Theorem}
\theoremstyle{definition}
\newtheorem{pr}{Problem}
\newtheorem*{definition}{Definition}
\theoremstyle{remark}
\newcommand\R{\mathbb{R}}
\newcommand{\prenorm}[2]{\left\Vert #2 \right\Vert_{#1}}
\definecolor{darkgreen}{rgb}{0,0.5,0}
\definecolor{purple}{rgb}{1,0,1}
\definecolor{darkred}{rgb}{0.5,0,0}
\newcommand{\kibitz}[2]{\ifnum\Comments=0\textcolor{#1}{#2}\fi}
\title{\LARGE \bf
Topology-based control design for congested areas in urban networks
}
\author{Liudmila Tumash, Carlos Canudas-de-Wit and Maria Laura Delle Monache
\thanks{This work was supported by the funding from the European Research Council (ERC) under the European Union’s
Horizon 2020 research and innovation programme (grant agreement 694209)}
\thanks{L. Tumash  is with Univ. Grenoble Alpes, CNRS, Inria, Grenoble INP, GIPSA-lab, 38000 Grenoble, France,
        {\tt\small liudmila.tumash@gipsa-lab.fr}}%
\thanks{C. Canudas-de-Wit is with Univ. Grenoble Alpes, CNRS, Inria, Grenoble INP, GIPSA-lab, 38000 Grenoble, France,
        {\tt\small carlos.canudas-de-wit@gipsa-lab.grenoble-inp.fr}}%
\thanks{M.~L. Delle Monache is with Univ. Grenoble Alpes, Inria, CNRS, Grenoble INP, GIPSA-Lab, 38000 Grenoble, France
        {\tt\small ml.dellemonache@inria.fr}}%
}
\begin{document}

\maketitle
\thispagestyle{empty}
\pagestyle{empty}

\begin{abstract}
This paper addresses the problem of a boundary control design for traffic evolving in a large urban network. The traffic state is described on a macroscopic scale and corresponds to the vehicle density, whose dynamics are governed by a two dimensional conservation law. We aim at designing a boundary control law such that the throughput of vehicles in a congested area is maximized. Thereby, the only knowledge we use is the network's topology, capacities of its roads and speed limits. In order to achieve this goal, we treat a 2D equation as a set of 1D equations by introducing curvilinear coordinates satisfying special properties. The theoretical results are verified on a numerical example, where an initially fully congested area is driven to a state with maximum possible throughput. 

\end{abstract}

\section{INTRODUCTION}

Rapidly growing urban areas cause heavy traffic congestions that negatively impact traffic mobility and environment, which makes traffic management an important issue to study. The first attempt to understand traffic was made in the fifties, as the kinematic wave theory has been introduced by Ligthill and Whitham \cite{LIG55} and, independently, Richards \cite{RIC56} (LWR model). This fluidodynamic model prescribes the conservation of the number of vehicles and describes the spatio-temporal evolution of vehicle density on a highway road. Further, this model was extended to the network level in \cite{HOL95} that represented a network as a set of edges (roads) and nodes (junctions). The classical traffic control strategies aim at improving the overall network efficiency and include ramp metering \cite{PAP91,REI15}, variable speed limits \cite{PAP08,MON17}, and control through route guidance\cite{HAL93} (see \cite{PAP03} for a review).

In case of large-scale urban networks, traffic modelling becomes a difficult task requiring macroscopic approaches due to increasing computational complexity. The first demonstration of a macroscopic relationship between density and flow should be recognized to \cite{WIL87}, who used data from microsimulations. Later this relation was also observed in the congested region of Yokohama, Japan \cite{GER08,DAG08}. The discovery of MFD (macroscopic fundamental diagram) gave rise to reservoir models, which track the number of cars in an urban area. However, if there is a large variance of densities in this area, then it should be partitioned into multiple zones in order to have a well-defined MFD \cite{HAJ13,LEC15}. Several control tasks have been posed and solved for MFD systems, e.g., see \cite{GER13,ABO13} that consider optimal perimeter control problems between different regions of urban network. 

However, control design in system described by MFD requires collection of large amount of data, as well as it can not correctly describe the variation of different traffic flow regimes as in was discussed in \cite{TUM19c}. Another way to model traffic in large-scale networks is to use a continuous two dimensional model known as 2D-LWR \cite{MOL18a}. This model describes the spatio-temporal evolution of vehicle density on a 2D plane. It includes the space-dependence of a fundamental diagram in order to capture the network infrastructure.
\vskip 4pt

\begin{minipage}[h]{0.2\textwidth}

\begin{tikzpicture}


\draw[black,ultra thick,-] (0,3) -- (3,3) -- (3,0) -- (0,0) -- (0,3);
\draw[-] (1,2.3) -- (2,2.3) -- (2,0.7) -- (1,0.7) -- (1,2.3);

\draw[-,fill,gray] (1.25,2.3) -- (1.35,2.3) -- (1.35,0.7) -- (1.25,0.7) -- (1.25,2.3);
\draw[-,fill,gray] (1.65,2.3) -- (1.75,2.3) -- (1.75,0.7) -- (1.65,0.7) -- (1.65,2.3);

\draw[-,fill,gray] (1,1.0) -- (1,1.1) -- (2,1.1) -- (2,1.0) -- (1,1.0);
\draw[-,fill,gray] (1,1.45) -- (1,1.55) -- (2,1.55) -- (2,1.45) -- (1,1.45);
\draw[-,fill,gray] (1,1.9) -- (1,2.0) -- (2,2.0) -- (2,1.9) -- (1,1.9);

\draw[ultra thick,->] (0.75,0.45) -- (0.95,0.65);
\draw[ultra thick,->] (1.08,0.45) -- (1.28,0.65);
\draw[ultra thick,->] (1.41,0.45) -- (1.61,0.65);

\draw[ultra thick,->] (0.75,0.78) -- (0.95,0.98);
\draw[ultra thick,->] (0.75,1.11) -- (0.95,1.31);
\draw[ultra thick,->] (0.75,1.44) -- (0.95,1.64);
\draw[ultra thick,->] (0.75,1.77) -- (0.95,1.97);

\node[below, scale=0.8] at (0.75,0.45) {demand};
\draw[ultra thick,-,darkred] (1,2.3) -- (2,2.3) -- (2,0.7);
\node[above, scale=0.8,darkred] at (2,2.3) {control};

\end{tikzpicture}
\end{minipage}
\hfill
\begin{minipage}[r]{0.26\textwidth}
In this paper, we consider a large-scale urban network with unidirectional roads, where the traffic dynamics are governed by the 2D-LWR model. This network will include congested areas, which we want to control from the boundary such that the
\end{minipage}
\vskip 4pt
\noindent maximal throughput of the traffic flow is achieved in the steady state. The stabilized system will be characterised by a reduced average latency and an increased average velocity. Our main contribution is to suggest a technique for the control design, for which we only need the knowledge of the network topology and its infrastructure, i.e. the maximal speeds and roads' capacities. This is the first work of this kind for two-dimensional traffic systems providing an explicit solution to the problem.

\section{Motivation}

\subsection{2D-LWR model}

The evolution of traffic in a large urban network can be described with the 2D-LWR model (\cite{MOL18a}), which is a two-dimensional conservation law, and the state corresponds to the vehicle density.

Let $D$ be a compact domain of the system corresponding to the considered urban area and let $\Gamma$ its boundary. We fix the initial condition $\rho_0(x,y)$ and the boundary flows $\phi_{in}(x,y,t)$ and $\phi_{out}(x,y,t)$ such that $\rho(x,y,t)$ : $D \times \mathbb{R}^+ \to \mathbb{R}^+$. Then, the initial boundary value problem (IBVP) for a system with dynamics governed by a 2D-LWR system reads:
\begin{equation}\label{IBVP}
\left\{
\begin{aligned}
& \frac{\partial \rho (x,y,t)}{\partial t} + \nabla \cdot \vec{\Phi} (x,y,\rho(x,y,t)) = 0, \\
& \vec{\Phi}(x,y,t) = \phi_{in}(x,y,t) \vec{d}_{\theta}(x,y), \quad \forall (x,y) \in \Gamma_{in} \\
& \vec{\Phi}(x,y,t) = \phi_{out}(x,y,t) \vec{d}_{\theta}(x,y), \quad \forall (x,y) \in \Gamma_{out} \\
& \rho (x,y,0) = \rho_0(x,y), 
\end{aligned} \right.
\end{equation}
where 
\begin{equation}\label{Flux_vector}
\vec{\Phi} = \Phi(x,y,\rho) \vec{d}_{\theta} (x,y)
\end{equation}
is a space-dependent flux function and 
\begin{equation}\label{Flux_direction}
\begin{aligned}
\vec{d}_\theta = &
\begin{pmatrix}
\cos(\theta(x,y)) \\
\sin(\theta(x,y))
\end{pmatrix}
\end{aligned}
\end{equation}
is the direction field set by the network's geometry. Further, $ \Gamma_{in} \subset \Gamma$ is a set of boundary points $(x,y)$ for which $\vec{n}(x,y) \cdot \vec{d}_{\theta} (x,y) > 0$, where $\vec{n}(x,y)$ is a unit normal vector to $\Gamma_{in}$ oriented inside $D$. Similarly, $\Gamma_{out} \subset \Gamma$ such that $\forall (x,y)\in \Gamma_{out}$ : $\vec{n}(x,y) \cdot \vec{d}_{\theta} (x,y) < 0$.

In \eqref{Flux_vector} the flow magnitude $\Phi (x,y,\rho) : [0, \rho_{max}(x,y)] \rightarrow \mathbb{R}^+$ is a strictly concave function with a unique maximum $\phi_{max}(x)$ $\forall (x,y) \in D$ (road \textit{capacity}) achieved at the \textit{critical density} $\rho_c (x,y)$, while the minimum is achieved twice, i.e., $\Phi(x,y,0) = \Phi(x,y,\rho_{max}) = 0$. The value of $\Phi (x,y,\rho)$ is determined by the \textit{fundamental diagram}, which relates flow and density of the system. We distinguish two different density regimes $ \forall (x,y) \in D$: $\Omega_f := [0, \rho_c(x,y)]$ indicates the \textit{free-flow regime} (vehicles move freely with positive kinematic wave speed), and $\Omega_c := (\rho_c(x,y), \rho_{max}(x,y)]$ is the \textit{congested regime} (negative kinematic wave speed), see Fig. \ref{fig_FD}. In this paper, we will consider the so-called \textit{Greenshields FD}, see \cite{GRE35}:

\begin{equation}\label{Flux}
\Phi \left(x, y, \rho (x,y,t) \right) = \varv_{max}(x,y) \left(1 - \frac{\rho (x,y,t)}{\rho_{max}(x,y)} \right) \rho (x,y,t). 
\end{equation}

Note that in \eqref{Flux} the function achieves $\phi_{max}$ at $\rho_c = \rho_{max}/2$.

\begin{figure}
\begin{center}
\begin{tikzpicture}

\node [below] at (5,0) {$\rho$};
\node [left] at (0,3.7) {$\Phi(\rho)$};

\path[thick,-]
(0,0) edge[out=85,in=185,looseness=1.0,fill=green!20] (2,3);
\path[fill=green!20] (0,0) -- (2,3) -- (2,0);

\path[thick,-]
(2,3) edge [out=355,in=95,looseness=1.0,fill=red!20] (4,0);
\path[fill=red!20] (2,3) -- (4,0) -- (2,0);
\path[fill=red!20] (2,3) -- (4,0);

\draw [<->] (0,3.7) -- (0,0) -- (5,0);

\draw[fill] (2,3) circle [radius=0.06];

\node at (1.2,1) {$\Omega_f$};
\node at (3,1) {$\Omega_c$};
\draw [dashed] (2,3) -- (2,0);
\draw [dashed] (2,3) -- (0,3);
\node [below] at (2,0) {$\rho_c = \frac{\rho_{max}}{2}$};
\node [left] at (0,3) {$\phi_{max}$};
\node [below left] at (0,0) {$0$};
\node [below] at (4,0) {$\rho_{max}$};
\end{tikzpicture}
\end{center}
\caption{Greenshields FD with free-flow regime denoted by $\Omega_f$ (in green) and congested regime denoted by $\Omega_c$ (in red).} \label{fig_FD}
\end{figure}
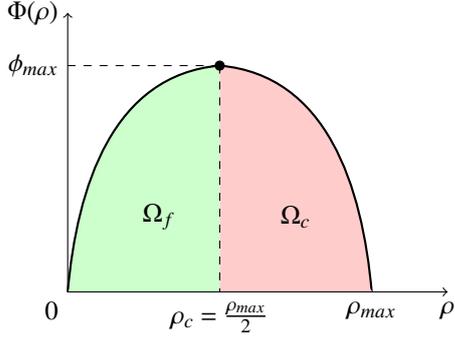

The in- and outflows $\phi_{in} (t)$ and $\phi_{out} (t)$ in \eqref{IBVP} are 
\begin{equation} \label{phi_numflux}
\left\{
\begin{aligned}
& \phi_{in} (x,y,t) = \min\left\{D_{in} (x,y,t), S \left(\rho (x,y,t) \right) \right\}, \quad (x,y) \in \Gamma_{in}  \\
& \phi_{out} (x,y,t) = \min\left\{D \left(\rho (x,y,t) \right), S_{out} (x,y,t) \right\}, \; \, (x,y) \in \Gamma_{out}.
\end{aligned} \right.
\end{equation}
where $D(\rho)$ and $S(\rho)$ are the demand and the supply functions defined as
\begin{equation}\label{EQ:demand}
D(\rho) = 
\left\{
\begin{aligned}
& \phi(\rho), \quad \text{if} \quad 0 \leq \rho \leq \rho_c, \\
& \phi_{max}, \quad \text{if} \quad \rho_c < \rho \leq \rho_{max},
\end{aligned} \right.
\end{equation}
\begin{equation}\label{EQ:supply}
S(\rho) = 
\left\{
\begin{aligned}
& \phi_{max}, \quad \text{if} \quad 0 \leq \rho \leq \rho_c, \\
& \phi(\rho), \quad \text{if} \quad \rho_c < \rho \leq \rho_{max}.
\end{aligned} \right.
\end{equation}

Finally, $D_{in} (t)$ and $S_{out} (t)$ are the demand function at the entry and the supply function at the exit of the road.

To reproduce the traffic's evolution on an urban network using \eqref{IBVP}, we need to construct $\forall (x,y) \in D$ the direction field $\vec{d}_{\theta}(x,y)$, the maximal density $\rho_{max}(x,y)$ and the maximal velocity $v_{max}(x,y)$. We do it by the Inverse Distance Weighting as described in \cite{TUM19c,MOL18a} assuming that the flux field depends on speed limits, thereby tuning a parameter measuring the sensitivity of the flux to the mutual location of roads in a network. The maximal density is reconstructed assuming that the network is filled completely (every $6m$) with vehicles, and that each vehicle contributes to the global density with a Gaussian kernel with standard deviation $d_0 = 100m$ centred at its position (see Section 4.1 of \cite{MOL18a}).

The flux field $\vec\Phi$ \eqref{Flux_vector} defined in \eqref{Flux_vector} depends on the state in the magnitude, but not in the direction of the flow. Therefore one can define stationary integral curves of the flux field that describe the paths along which the flow propagates.

\subsection{2D problem as a set of 1D problems}

Assume that we can perform coordinate transformation that translates integral curves of the flux field into a set of straight parallel lines, where each such line can be treated as a 1D system. We introduce the new coordinates $(\xi,\eta)$ from:

\begin{equation}\label{Jacobian}
\begin{pmatrix}
d \xi \\
d \eta
\end{pmatrix}
= C_{\theta}(x,y) R_{\theta}(x,y)
\begin{pmatrix}
dx \\
dy
\end{pmatrix}
\end{equation}
where $R_{\theta}(x,y)$ is the rotation matrix used to rotate the integral lines in $(x,y)$-plane and $C_{\theta}(x,y)$ is the scaling matrix used to make these lines having the same metric as illustrated in Fig. \ref{fig_trafo} (it will be explained in more details in Section III, B).

\begin{figure}
\begin{center}
\begin{tikzpicture}

\draw[thick,->] (0,0) -- (0,3.5);
\node[left] at (0,1.5) {$y$};
\draw[-,dashed] (0,3) -- (3,3) -- (3,0);
\draw[thick,->] (0,0) -- (3.3,0);
\node[below] at (1.5,0) {$x$};

\node[above left] at (0,3) {$(a)$};
\draw[-, dashed] (4.7,1.5) -- (6.1,0) -- (6.2,0) -- (7.6,1.5) -- (6.15,3.1) -- (4.7,1.5);
\draw[thick,->] (4.7,0) -- (4.7,3.5);
\node[above left] at (4.7,3) {$(b)$};
\node[left] at (4.7,2.7) {$\eta$};
\draw[thick,->] (4.7,0) -- (7.9,0);
\node[below] at (6.3,0) {$\xi$};

\draw[darkgreen,ultra thick,-] (5.7875,0.3) -- (6.5125,0.3);
\node at (5.7,0.2) {$1$};
\draw[darkgreen,ultra thick,-] (5.425,0.7) -- (6.875,0.7);
\node at (5.3,0.6) {$2$};
\draw[darkgreen,ultra thick,-] (5.0625,1.1) -- (7.2375,1.1);
\node at (4.9,1) {$3$};
\draw[darkgreen,ultra thick,-] (4.7,1.5) -- (7.6,1.5);
\node[left] at (4.7,1.5) {$4$};
\draw[darkgreen,ultra thick,-] (5.0625,1.9) -- (7.2375,1.9);
\node[above] at (4.9,1.8) {$5$};
\draw[darkgreen,ultra thick,-] (5.425,2.3) -- (6.875,2.3);
\node[above] at (5.3,2.2) {$6$};
\draw[darkgreen,ultra thick,-] (5.7875,2.7) -- (6.5125,2.7);
\node[above] at (5.7,2.6) {$7$};

\draw[darkgreen,ultra thick,-] (0,1.8) .. controls (1,2) and (1.4,2.3) .. (1.9,3);
\node[above] at (0.2,1.8) {$6$};

\draw[darkgreen,ultra thick,-] (0,2.4) .. controls (0.3,2.5) and (0.9,2.6) .. (1.3,3);
\node[above] at (0.2,2.4) {$7$};

\draw[darkgreen,ultra thick,-] (0,0) .. controls (1,2) and (2,1) .. (3,3);
\node[above] at (0.2,0.4) {$4$};
\draw[darkgreen,ultra thick,-] (0,1) .. controls (1.5,2.5) and (2,1.2) .. (2.5,3);
\node[above] at (0.2,1.18) {$5$};
\draw[darkgreen,ultra thick,-] (0.8,0) .. controls (1.5,1.5) and (2,1.3) .. (3,2.5);
\node[above] at (0.7,0.0) {$3$};
\draw[darkgreen,ultra thick,-] (1.5,0) .. controls (1.6,1.2) and (2.5,1.4) .. (3,2);
\node[above] at (1.35,0) {$2$};
\draw[darkgreen,ultra thick,-] (2.2,0) .. controls (2.4,0.9) and (2.7,1.3) .. (3,1.5);
\node[above] at (2.1,0) {$1$};

\path[thick,->]
(3,1.8) edge [out=60,in=120,looseness=1.0] (4.6,1.8);
\node[below] at (3.8,2.1) {$C_{\theta}R_{\theta}$};

\end{tikzpicture}
\end{center}
\caption{Coordinate transformation mapping curved trajectories (a) into straight lines (b) having the same metric.} \label{fig_trafo}
\end{figure}
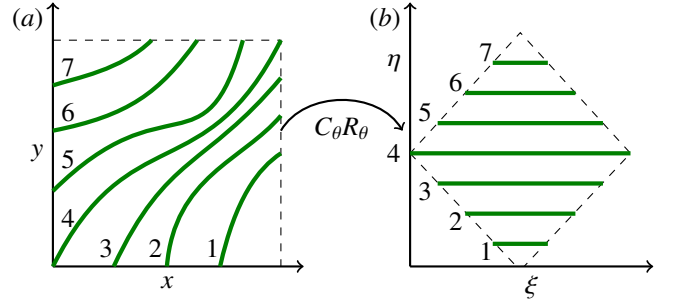

In case of straight integral curves (see Fig. \ref{fig_trafo}b) we do not need to perform neither rotation nor scaling, i.e. $\theta = 0$ $\forall (x,y) \in D$ and $C_\theta = R_{\theta} = \mathbb{I}$. Let us first pose and solve the control problem for this case. Then, we will show that this problem is similarly solved for the case of curved lines due to the coordinate transformation driving (a) to (b) in Fig. \ref{fig_trafo}.

\section{Control design}

In this section we first determine what is the desired state $\rho_d (x,y)$ providing the maximal throughput of the system, and then we pose and solve the control problem.


\subsection{Case of straight lines}

We rewrite \eqref{IBVP} in coordinates $(\xi,\eta)$, which by \eqref{Jacobian} coincide with $(x,y)$ if $\forall (x,y) \in D$ $\theta(x,y) = 0$ (Fig. \ref{fig_trafo}b). The vector field becomes $\vec{d}_{\theta} (\xi,\eta) = (1, 0)$, and by \eqref{Flux_vector}:
\begin{equation}\label{Phi_0}
\vec{\Phi} = \Phi(\xi,\eta,\rho) \begin{pmatrix}
1 \\
0
\end{pmatrix},
\end{equation}
and using \eqref{Phi_0} we obtain for the nabla operator in \eqref{IBVP}: 
\begin{equation}\label{nabla_0}
\begin{pmatrix}
\frac{\partial}{\partial \xi}, &\frac{\partial}{\partial \eta}
\end{pmatrix}
\begin{pmatrix}
1 \\ 
0
\end{pmatrix}
\Phi(\xi,\eta,\rho) = \frac{\partial \Phi(\xi,\eta,\rho)}{\partial \xi}.
\end{equation}

We are now ready to rewrite \eqref{IBVP} using \eqref{Phi_0} and \eqref{nabla_0} as

\begin{equation}\label{IBVP_0}
\left\{
\begin{aligned}
& \frac{\partial \rho (\xi,\eta,t)}{\partial t} +  \frac{\partial \Phi \left(\xi,\eta,\rho \right)}{\partial \xi} = 0, \\
& \phi_{in}(\eta,t) = \min\left\{D_{in} (\eta,t), S \left(\rho \left(\xi_{min}(\eta),\eta,t \right) \right) \right\}, \\
& \phi_{out}(\eta,t) = \min\left\{D \left(\rho \left(\xi_{max}(\eta),\eta,t \right) \right), S_{out} \left(\eta, t \right) \right\},  \\
& \rho (\xi,\eta,0) = \rho_0(\xi,\eta), 
\end{aligned} \right.
\end{equation} 
where 
$$
\xi_{min}(\eta) = \min\limits_{\substack{(x,y)\in D, \\ \eta(x,y)=\eta }} \xi(x,y),
\quad \xi_{max}(\eta) = \max\limits_{\substack{(x,y)\in D, \\ \eta(x,y)=\eta }} \xi(x,y). 
$$

The system \eqref{IBVP_0} is a continuous set of 1D-LWR equations each following the path numbered by $\eta$.

\vskip 5pt

\subsubsection{Steady-state}

Our goal is to drive the system to the steady-state providing the maximal throughput of the system. By \eqref{IBVP_0}, the steady-state $\rho^*(\xi,\eta)$ implies space-independence of $\phi^*(\eta)$, and can be achieved only for stationary $D_{in}(\eta)$ and $S_{out}(\eta)$. The steady-state flow can be obtained in analogy with \cite{WU14} who analysed it for a single ring road with bottlenecks, i.e., $\forall \eta \in D$ the following holds
\begin{equation}\label{ss_flow}
\phi^*(\eta) = \min \left\{D_{in} (\eta), \min\limits_{\xi \in [\xi_{min}(\eta),\xi_{max}(\eta)]} \phi_{max} (\xi,\eta), S_{out} (\eta) \right\} \quad 
\end{equation}  

Thus, the steady-state flow is the minimum between demand at the entry, supply at the exit and the minimal capacity along the path $\eta$, which is related to the strongest bottleneck. By bottlenecks we mean permanent capacity constraints in the network itself, i.e., the road segment with low speed limit or with fewer lanes (see Fig. \ref{fig_bottleneck}). 

We also need to extract the correct density $\rho^*$ from \eqref{ss_flow}, since there are two density values (one in free-flow regime, one in congested regime) that correspond to the same flow. Let us fix some $\eta$ and denote the location of the strongest bottleneck by $\xi^*$. We assume that the inflow $D_{in}$ is always larger than the capacity at the bottleneck. Then, if $S_{out}$ is also larger, according to \cite{WU14}, the steady-state $\rho^*$ corresponds to the congested regime $\forall \xi \in [\xi_{min},\xi^*)$, and then the free-flow regime occurs $\forall \xi \in (\xi^*,\xi_{max}]$. If there are several such $\xi^*$ (or it is an interval), then we take the left-most value, i.e., $\xi^* = \xi^*_1$ in Fig. \ref{fig_bottleneck}. If $S_{out}$ is smaller than the capacity at the bottleneck, the whole domain will be congested.

\subsubsection{Optimal equilibrium manifolds}

Here we mainly consider congested urban areas, which appear if the demand at its upstream boundary is too high, i.e. $D_{in} \geq \phi_{max}$. Thus, the minimum function in \eqref{phi_numflux} is always resolved to the supply function at the domain exit, which becomes the control variable, i.e., $u(\eta) = S_{out} (\eta)$ $\forall \eta \in D$. By \eqref{ss_flow} this means that in order to provide the maximal throughput of the system we need to set $S_{out} (\eta) = \phi_{max} \left(\xi^*(\eta),\eta \right)$. However, this control will not be accepted by the system, since the traffic will be in the free-flow regime $\forall \xi \in [\xi^*(\eta),\xi_{max}(\eta)]$ (see \eqref{ss_flow} and the discussion above).
Thus, we introduce some small constant $\epsilon > 0$ that needs to be subtracted from the maximal possible flow along the path in the equilibrium $\forall \eta \in D$:
\begin{equation}\label{desired_flow}
\phi_d \left(\eta \right) = \phi_{max} \left(\xi^*(\eta), \eta \right) - \epsilon.
\end{equation}

By setting $S_{out}(\eta) = \phi_d (\eta)$ we translate the bottleneck location from $\xi^*(\eta)$ to $\xi_{max}(\eta)$, and then the congested regime will capture the whole interval $[\xi_{min}(\eta),\xi_{max}(\eta)]$ (see Fig. \ref{fig_bottleneck}). This allows us to control the system from the exit. From the practical viewpoint, subtraction of $\epsilon$ does not change much the desired state, since $\epsilon$ can be set to an arbitrarily small value. Thus, in the following we will call the desired state an $\epsilon$-optimal state w.r.t. throughput maximization. Note that controlling the domain exit can be physically realized by installing, e.g., traffic lights.

\begin{figure}
\begin{center}
\begin{tikzpicture}

\node [below] at (7.5,0) {$L$};
\draw [->] (0,0) -- (7.5,0);
\path[fill=black!30,thick,-] (0,0.5) -- (2.6,0.5) -- (2.6,1) -- (4.6,1) -- (4.6,0.5) -- (7.5,0.5) -- (7.5, 1.5) -- (0,1.5) -- (0,0.5);
\draw [thick,-] (0,0.5) -- (2.6,0.5);
\draw [thick,-] (2.6,0.5) -- (2.6,1);
\draw [thick,-] (2.6,1) -- (4.6,1);
\draw [thick,-] (4.6,1) -- (4.6,0.5);
\draw [thick,-] (4.6,0.5) -- (7.5,0.5);
\draw [thick,-] (0,1.5) -- (7.5,1.5);
\draw [thick,-] (0,1.5) -- (0,0.5);
\draw [thick,-] (7.5,1.5) -- (7.5,0.5);
\draw [dashed] (2.6,0.5) -- (2.6,-0.2);
\node [below] at (2.6,0) {$\xi^*_1$};
\draw [dashed] (4.6,0.5) -- (4.6,-0.2);
\node [below] at (4.6,0) {$\xi^*_2$};
\node [below left] at (0,0) {$0$};

\path[thick,-]
(0.3,2.3) edge [out=80,in=180,looseness=1.0,fill=green!20] (1.3,4.1);
\path[fill=green!20] (0.3,2.3) -- (1.3,4.1) -- (1.3,2.3);
\path[thick,-]
(1.3,4.1) edge [out=355,in=100,looseness=1.0,fill=red!20] (2.3,2.3);
\path[fill=red!20] (1.3,2.3) -- (1.3,4.1) -- (2.3,2.3);
\path[fill=red!20] (1.3,4.1) -- (2.3,2.3);
\draw[fill] (1.85,3.83) circle [radius=0.06];
\draw[thick,->] (0.3,2.3) -- (2.5,2.3);
\node [below] at (1.4,2.3) {$\rho$};
\draw[thick,->] (0.3,2.3) -- (0.3,4.7);
\node [left] at (0.3,4.5) {$\Phi(\rho)$};

\path[thick,-]
(2.7,2.3) edge [out=80,in=180,looseness=1.0,fill=green!20] (3.5,3.9);
\path[fill=green!20] (2.7,2.3) -- (3.5,3.9) -- (3.5,2.3);
\path[thick,-]
(3.5,3.9) edge [out=355,in=100,looseness=1.0,fill=red!20] (4.3,2.3);
\path[fill=red!20] (3.5,3.9) -- (3.5,2.3) -- (4.3,2.3);
\path[fill=red!20] (3.5,3.9) -- (4.3,2.3);
\draw[fill] (3.71,3.83) circle [radius=0.06];
\node[above] at (3.71,3.83) {$\phi_{max} (\xi^*) - \epsilon$};
\draw[thick,->] (2.7,2.3) -- (4.5,2.3);
\node [below] at (3.6,2.3) {$\rho$};
\draw[thick,->] (2.7,2.3) -- (2.7,4.7);
\node [left] at (2.7,4.5) {$\Phi(\rho)$};

\path[thick,-] 
(4.7,2.3) edge [out=80,in=185,looseness=1.0,fill=green!20] (5.8,4.2);
\path[fill=green!20] (4.7,2.3) -- (5.8,4.2) -- (5.8,2.3);

\path[thick,-]
(5.8,4.2) edge [out=355,in=100,looseness=1.0,fill=red!20] (6.9,2.3);

\path[fill=red!20] (5.8,4.2) -- (6.9,2.3) -- (5.8,2.3);
\path[fill=red!20] (5.8,4.2) -- (6.9,2.3);
\draw[fill] (6.45,3.83) circle [radius=0.06];
\draw[thick,->] (4.7,2.3) -- (7.1,2.3);
\node [below] at (5.9,2.3) {$\rho$};
\draw[thick,->] (4.7,2.3) -- (4.7,4.7);
\node [left] at (4.7,4.5) {$\Phi(\rho)$};

\draw [dashed] (0,3.83) -- (7.5,3.83);

\node [above] at (7.5,3.83) {$S_{out}$};

\end{tikzpicture}
\end{center}
\caption{Single road with corresponding fundamental diagrams and with a bottleneck located in the sketch $\xi^* = [\xi^*_1, \xi^*_2]$.} \label{fig_bottleneck}
\end{figure}
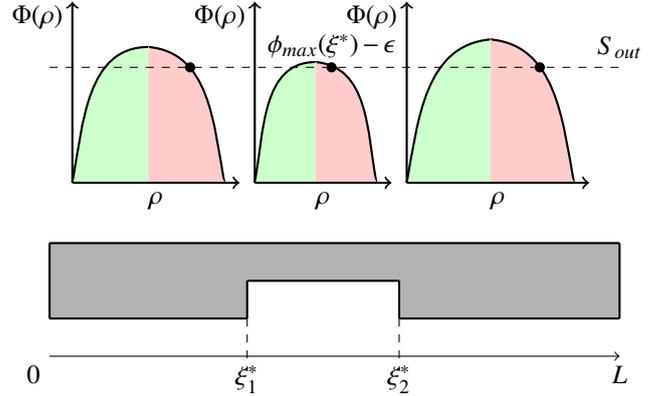

\begin{definition}
The desired $\epsilon$-optimal equilibrium state $\rho_d (\xi,\eta)$ w.r.t. the throughput maximization is defined $\forall \eta \in D$ as
\begin{equation}\label{desired_density}
\rho_d (\xi,\eta) = \frac{\rho_{max} (\xi,\eta)}{2} + \sqrt{\frac{\rho_{max}^2 (\xi,\eta)}{4} - \frac{\rho_{max} (\xi,\eta)}{\varv_{max} (\xi,\eta)} \phi_d(\eta)}, 
\end{equation}
where $\phi_d(\eta)$ is defined in \eqref{desired_flow} and $\epsilon > 0$, see Fig. \ref{fig_bottleneck}.
\end{definition}

Note that \eqref{desired_density} was obtained by taking the inverse of \eqref{Flux} for $\Phi(\rho_d) = \phi_d$ with the plus sign for the congested regime. 

\vskip 5pt

\subsubsection{Control design for straight lines}

Let us define the density error 
\begin{equation}\label{density_error}
\tilde{\rho}\left(\xi, \eta, t \right) = \rho \left(\xi, \eta, t \right) - \rho_d \left( \xi, \eta \right),
\end{equation}
 
and introduce the $L_2$-norm for the density error as
\begin{equation*}
\prenorm{2}{\tilde{\rho}(\xi,\eta,t)} := \sqrt{ \int\limits_{\eta_{min}}^{\eta_{max}} \int\limits_{\xi_{min} (\eta)}^{\xi_{max} (\eta)}  \tilde{\rho}^2(\xi,\eta,t) d \xi d \eta}, \quad \forall t \in \R^+,
\end{equation*}
where
$$
\eta_{min} = \min\limits_{(x,y)\in D} \eta(x,y), \quad \eta_{max} = \max\limits_{(x,y)\in D} \eta(x,y).
$$

\begin{pr}\label{Problem1}

Given network functions $\varv_{max}(\xi,\eta)$ and $\rho_{max}(\xi,\eta)$ $\forall (\xi,\eta) \in D$, initially congested traffic $\rho_0(\xi,\eta) \in (\rho_d (\xi,\eta), \rho_{max}(\xi,\eta)]$ with dynamics governed by \eqref{IBVP_0}, and given large constant demand at domain entry $D_{in}(\eta)$ $\forall \eta \in D$, find a boundary control $u(\eta) = S_{out} (\eta)$ such that $\forall (\xi, \eta) \in D$:

\begin{equation}\label{goal}
\lim \limits_{t \to \infty} \prenorm{2}{\tilde{\rho}(\xi,\eta,t)} = 0.   
\end{equation}
\end{pr}

\begin{theorem}
The goal defined in Problem 1 is achieved with
\begin{equation}\label{Result}
u(\eta) = S_{out}(\eta)  = \phi_{max} \left( \xi^*(\eta), \eta \right) - \epsilon \quad \forall \eta \in D,
\end{equation}
where $\phi_{max} \left( \xi^*(\eta), \eta \right) = \min\limits_{\xi \in [\xi_{min}(\eta),\xi_{max}(\eta)]} \phi_{max}\left(\xi, \eta \right)$.
\end{theorem}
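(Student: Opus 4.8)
The plan is to exploit the decoupling already exhibited in \eqref{IBVP_0}: for every fixed $\eta$ the dynamics reduce to a scalar, space-dependent conservation law $\partial_t\rho + \partial_\xi\Phi(\xi,\eta,\rho)=0$ on the interval $[\xi_{min}(\eta),\xi_{max}(\eta)]$, so it suffices to prove pointwise-in-$\eta$ convergence $\rho(\cdot,\eta,t)\to\rho_d(\cdot,\eta)$ and then recover \eqref{goal} by integrating over $\eta$, using dominated convergence or a locally bounded convergence time $T(\eta)$. First I would verify that $\rho_d$ is the equilibrium selected by the control \eqref{Result}. At the exit, $\rho_d(\xi_{max},\eta)$ lies in $\Omega_c$, so $D(\rho_d(\xi_{max},\eta))=\phi_{max}(\xi_{max},\eta)\ge\phi_d(\eta)$, while $S_{out}=\phi_d$, hence $\phi_{out}=\phi_d$; at the entry $S(\rho_d(\xi_{min},\eta))=\phi_d$ (congested supply equals the flow) and $D_{in}$ is large, hence $\phi_{in}=\phi_d$. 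Since $\rho_d$ is defined in \eqref{desired_density} as the congested root of $\Phi(\xi,\eta,\cdot)=\phi_d(\eta)$, the flow is constant in $\xi$, so $\partial_\xi\Phi=0$ and $\rho_d$ is a boundary-compatible steady state.

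The core is the convergence, which I would carry out by the method of characteristics. Throughout $\Omega_c$ the characteristic speed $\partial_\rho\Phi(\xi,\eta,\rho)=\varv_{max}(1-2\rho/\rho_{max})$ is strictly negative, so information propagates upstream, from $\xi_{max}$ toward $\xi_{min}$; the exit is therefore the only boundary through which data enter the domain, and the control pins the density there at $\rho_d(\xi_{max},\eta)$. Because the initial data satisfy $\rho_0>\rho_d$ (strictly more congested, so $\Phi(\rho_0)<\phi_d$), releasing the larger outflow $\phi_d$ at the exit launches a rarefaction connecting the interior value $\rho_0$ to $\rho_d$, all of whose characteristics travel strictly leftward with speeds between $\partial_\rho\Phi(\rho_0)$ and $\partial_\rho\Phi(\rho_d)$. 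I would then show that this fan sweeps the whole interval in finite time $T(\eta)\le(\xi_{max}(\eta)-\xi_{min}(\eta))/\min_\xi|\partial_\rho\Phi(\xi,\eta,\rho_d)|$, the slowest wave being the one attached to the $\rho_d$ state, after which the region where $\rho=\rho_d$ has grown to cover $[\xi_{min}(\eta),\xi_{max}(\eta)]$, so that $\tilde\rho(\cdot,\eta,t)\equiv 0$ for $t\ge T(\eta)$.

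Two points require care. First, the flux is space-dependent, so characteristics are curved and $\rho$ is transported by the ODE $\dot\rho=-\partial_\xi\Phi$ rather than being conserved; I would control this with a comparison/monotonicity argument for scalar conservation laws, sandwiching the solution between $\rho_d$ and $\sup_\xi\rho_0$ to guarantee it never leaves $\Omega_c$ — precisely what keeps $\partial_\rho\Phi<0$ and validates the upstream sweep. Second, the boundary fluxes in \eqref{IBVP_0} must be read in the Bardos--Le Roux--N\'ed\'elec entropy sense; I would check that, since all characteristics exit through $\xi_{min}$, the entry condition is non-binding (outgoing information), so no spurious boundary layer forms there and $\rho_d$ is attained exactly. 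I expect this second step to be the main obstacle: making the boundary-condition bookkeeping rigorous for a space-dependent flux while simultaneously proving the solution remains congested. Once pointwise convergence with a locally bounded $T(\eta)$ is established, assembling the vanishing of $\prenorm{2}{\tilde{\rho}(\xi,\eta,t)}$ in \eqref{goal} is immediate.
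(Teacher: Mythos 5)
Your proposal is correct in outline but takes a genuinely different route from the paper. The paper's proof is a weighted-Lyapunov argument: it defines $V(t)=\frac{1}{2}\int_{\xi_{min}}^{\xi_{max}} e^{\xi}\tilde\rho^2\,d\xi$ for each fixed $\eta$, \emph{linearizes} the flux around the desired state, $\Phi(\xi,\rho_d+\tilde\rho)\approx\Phi(\xi,\rho_d)+\Phi^{\prime}(\rho_d)\tilde\rho$, computes $\Phi^{\prime}(\rho_d(\xi^*))=-\nu$ with $\nu$ of order $\sqrt{v_{max}(\xi^*)\,\epsilon}$ at the bottleneck, integrates by parts, cancels the exit boundary term by choosing the control \eqref{Result} so that $\tilde\rho(\xi_{max},\eta,t)=0$, and concludes $\dot V\le -\nu V$ plus a nonpositive entry term, i.e.\ exponential $L_2$ decay of the linearized error, extended to the $(\xi,\eta)$-norm exactly as you do, by pointwise convergence in $\eta$ over a bounded domain. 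You instead work with the fully nonlinear entropy solution: the invariant-region step is clean because both $\rho_d(\xi)$ (congested root of $\Phi(\xi,\cdot)=\phi_d$, by \eqref{desired_density}) and $\rho_{max}(\xi)$ (zero-flux branch) are \emph{stationary} solutions of \eqref{IBVP_0}, so the standard comparison principle for monotone (Godunov/demand--supply) boundary fluxes keeps $\rho\in[\rho_d,\rho_{max}]$, hence $\Phi^{\prime}(\xi,\rho)\le\Phi^{\prime}(\xi,\rho_d(\xi))<0$ uniformly; then every backward generalized characteristic from $(\xi,t)$ with $t>\left(\xi_{max}-\xi_{min}\right)/\min_{\xi}\left|\Phi^{\prime}(\xi,\rho_d(\xi))\right|$ terminates on the controlled exit boundary, forcing $\rho\equiv\rho_d$ there. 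What each approach buys: the paper's argument is short and yields an explicit exponential rate $\nu$, but it is only valid for the linearized dynamics (for the Greenshields flux the discarded remainder is exactly $-v_{max}\tilde\rho^2/\rho_{max}$) and tacitly assumes enough smoothness to integrate by parts, i.e.\ no shocks; your argument needs the Bardos--Le Roux--N\'ed\'elec boundary machinery and the space-dependent-flux bookkeeping you flag, but it handles shocks and wave interactions honestly and delivers a strictly stronger conclusion — exact convergence in \emph{finite} time $T(\eta)=O\bigl((\xi_{max}-\xi_{min})/\sqrt{\epsilon}\bigr)$, uniformly bounded over $\eta$ on the compact domain, after which the norm in \eqref{goal} is identically zero. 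Your boundary bookkeeping is also consistent with the paper's setup: at the exit the congested trace gives $\phi_{out}=\min\{\phi_{max}(\xi_{max},\eta),\phi_d\}=\phi_d$, and at the entry the congested supply makes $D_{in}$ non-binding, matching the paper's observation that the entry term in its Lyapunov estimate is harmlessly nonpositive. The two genuine obligations you identify (BLN traces for a space-dependent flux; invariance of the congested regime) are real but standard, and the invariance one is already discharged by the two-stationary-solutions sandwich above.
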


\begin{proof}
We define the Lyapunov function candidate $\forall \eta \in D$
\begin{equation}\label{Lyapunov_Function}
V(t) = \frac{1}{2} \int\limits_{\xi_{min}}^{\xi_{max}} e^{\xi} \tilde{\rho}^2(\xi,\eta,t) d\xi,
\end{equation}
where $e^{\xi}$ is used as a weighting function. For simplicity of notations, we neglect variable $\eta$ as an argument. The time derivative of \eqref{Lyapunov_Function} is
\begin{equation}\label{LF_der1}
\dot{V}(t) = \int\limits_{\xi_{min}}^{\xi_{max}} e^{\xi} \tilde{\rho}(\xi,t)  \frac{\partial \tilde{\rho}(\xi,t)}{\partial t} d\xi.
\end{equation}

To simplify \eqref{LF_der1}, we use the time-independence of $ \rho_d$ as:
\begin{equation}\label{equality}
\frac{\partial \tilde{\rho}(\xi,t)}{\partial t} =  \frac{\partial \rho(\xi,t)}{\partial t} \equiv -\frac{\partial \Phi \left(\xi,\rho \right)}{\partial \xi} = -\frac{\partial \Phi \left(\xi, \rho_d + \tilde{\rho} \right)}{\partial \xi}.
\end{equation}

We consider the most right-hand-side term and linearise the flux function around the desired state as follows:

\begin{equation}\label{Linearisation}
\begin{aligned}
\Phi \left(\xi, \rho_d + \tilde{\rho} \right) \approx \Phi \left(\xi,\rho_d \right) + \frac{\partial \Phi \left( \xi, \rho_d \right)}{\partial \rho} \tilde{\rho} 
\end{aligned}
\end{equation}

Using time-independence of the first term on the right side of \eqref{Linearisation}, we obtain from \eqref{equality} that 
\begin{equation}\label{equality_2}
\frac{\partial \tilde{\rho}(\xi,t)}{\partial t} = -\frac{\partial \left( \Phi^{\prime} \left( \rho_d \right) \tilde{ \rho} \right)}{\partial \xi},
\end{equation}
where the prime denotes $\Phi^{\prime} = \partial \Phi / \partial \rho$.

Now we insert \eqref{equality_2} into \eqref{LF_der1} and get

\begin{equation}\label{LF_der2}
\begin{aligned}
& \dot{V}(t) = - \int\limits_{\xi_{min}}^{\xi_{max}} e^{\xi} \tilde{\rho}(\xi,t) \frac{\partial \left(\Phi^{\prime} \left(\rho_d \right) \tilde{\rho} \left(\xi,t \right) \right)}{\partial \xi} d\xi \\ & 
= - \int\limits_{\xi_{min}}^{\xi_{max}} e^{\xi} \frac{2 \Phi^{\prime} \left(\rho_d \right)}{2 \Phi^{\prime} \left(\rho_d \right)} \tilde{\rho}(\xi,t) \frac{\partial \left( \Phi^{\prime} \left(\rho_d \right) \tilde{\rho} \left(\xi,t \right) \right)}{\partial \xi} d\xi \\ & 
= - \int\limits_{\xi_{min}}^{\xi_{max}} \frac{e^\xi}{2 \Phi^{\prime} \left(\rho_d \right)} \frac{\partial \left(\Phi^{\prime} \left(\rho_d \right) \tilde{\rho} \left(\xi,t \right) \right)^2}{\partial \xi} d\xi.
\end{aligned}
\end{equation}

We now consider $\Phi^{\prime} \left( \rho_d \left( \xi \right) \right)$, which by \eqref{Flux} is defined as
\begin{equation}\label{Greenshields_der}
\Phi^{\prime} \left(\rho_d \left( \xi \right) \right) = v_{max} \left( \xi \right) \left( 1 - \frac{2 \rho_d \left( \xi \right)}{ \rho_{max} \left( \xi \right)} \right).
\end{equation}

Let us estimate the desired state at the bottleneck located at $\xi^*$,  using \eqref{Flux} and \eqref{desired_density}. By \eqref{Flux} with $ \rho_c = \rho_{max} / 2$, we get $ \phi_{max} = v_{max} \rho_{max} / 4$. Then, using \eqref{desired_density} we can write:

\begin{equation}\label{rho_des_bottleneck}
\begin{aligned}
& v_{max} \left( \xi^* \right) \rho_d \left( \xi^* \right) - \frac{v_{max} \left( \xi^* \right) \rho_{d}^{2} \left( \xi^* \right)}{\rho_{max} \left( \xi^* \right)} = \frac{v_{max} \left( \xi^* \right) \rho_{max} \left( \xi^* \right)}{4} - \epsilon \\ & \Rightarrow \rho_{d}^{2} \left( \xi^* \right) - \rho_d \left( \xi^* \right) \rho_{max} \left( \xi^* \right) + \frac{\rho_{max}^{2} \left( \xi^* \right)}{4} - \frac{\epsilon}{v_{max} \left( \xi^* \right)} = 0 \\ & \Rightarrow \rho_d \left( \xi^* \right) = \frac{\rho_{max} \left( \xi^* \right)}{2} + \sqrt{\frac{\epsilon}{ v_{max} \left( \xi^* \right)}} 
\end{aligned}
\end{equation}

In the latter expression we chose ''plus'' to provide the congested regime. Being a concave FD, its derivative $\Phi^{\prime}$ achieves its maximal value at the bottleneck (in the free-flow regime it is vice versa). Thus, we insert \eqref{rho_des_bottleneck} into \eqref{Greenshields_der} and introduce a variable $\nu$ used to denote $ \Phi^{\prime}$ at the bottleneck:
\begin{equation}\label{nu}
\Phi^{\prime} \left( \xi^* \right) = - \sqrt{v_{max} \left( \xi^* \right) \epsilon} = -\nu.
\end{equation}

To gain more insight, let us now again use $\eta$ as an argument. Thus, we can bound \eqref{LF_der2} from above using \eqref{nu}:

\begin{equation}\label{LF_der3}
\dot{V}(t) \leq \frac{1}{2 \nu} \int\limits_{\xi_{min}}^{\xi_{max}} e^{\xi} \frac{\partial \left(\Phi^{\prime} \left(\rho_d \left( \xi, \eta \right) \right) \tilde{\rho} \left(\xi, \eta, t \right) \right)^2}{\partial \xi} d\xi.
\end{equation}

Integration by parts of \eqref{LF_der3} yields
\begin{equation} \label{Int_by_parts}
\begin{aligned}
\dot{V}(t) = & \frac{e^{\xi_{max}}}{2 \nu} \Phi^{\prime 2} \left( \rho_d \left( \xi_{max}, \eta \right) \right) \tilde{\rho}^2 \left( \xi_{max}, \eta, t \right) \\ & - \frac{e^{\xi_{min}}}{2 \nu} \Phi^{\prime 2} \left( \rho_d \left( \xi_{min}, \eta \right) \right) \tilde{\rho}^2 \left( \xi_{min}, \eta, t \right) \\ & - \frac{1}{2 \nu} \int\limits_{\xi_{min}}^{\xi_{max}} e^{\xi} \left( \Phi^{\prime} \left( \rho_d  \left(\xi, \eta \right) \right) \tilde{\rho} \left( \xi, \eta, t \right) \right)^2 d\xi 
\end{aligned}
\end{equation}

The last term in \eqref{Int_by_parts} can be again bounded by $\nu$ as follows:

\begin{equation} \label{Integral_bound}
\begin{aligned}
 - \frac{1}{2 \nu} \int\limits_{\xi_{min}}^{\xi_{max}} e^{\xi} & \left( \Phi^{\prime} \left( \rho_d \left( \xi, \eta \right) \right) \tilde{\rho} \left( \xi, \eta, t \right) \right)^2 d\xi \\ & \leq  - \frac{\nu}{2} \int\limits_{\xi_{min}}^{\xi_{max}} e^{\xi} \tilde{\rho}^2 \left( \xi, \eta, t \right) d\xi  = -\nu V(t)
\end{aligned}
\end{equation}

Inserting \eqref{Integral_bound} into \eqref{Int_by_parts}, we see that the only positive term is the first one, which can be eliminated by setting $\tilde{\rho} \left( \xi_{max}, \eta, t \right) = 0$, i.e., $\rho\left(\xi_{max}, \eta, t \right) = \rho_d \left( \xi_{max}, \eta \right)$. In terms of control variables this is equivalent to
\begin{equation}\label{u_out}
u(\eta) = S_{out}(\eta)  = \phi_{max} \left( \xi^*(\eta), \eta \right) - \epsilon, \quad \forall \eta \in D.
\end{equation}
Note that the control term is different for each $\eta$. Thus, with \eqref{u_out} and \eqref{Integral_bound}, we can rewrite \eqref{Int_by_parts}

\begin{equation} \label{Int_by_parts_2}
\dot{V}(t) = - \frac{e^{\xi_{min}}}{2 \nu} \Phi^{\prime 2} \left( \rho_d \left( \xi_{min}, \eta \right) \right) \tilde{\rho}^2 \left( \xi_{min}, \eta, t \right) - \nu V(t).
\end{equation}

We have proved the $L_2$ convergence in $\xi$ coordinate of the state $\rho \left( \xi, \eta, t \right)$ to the desired state $\rho_d \left( \xi, \eta \right)$ as $t \to \infty$ $\forall \eta \in D$. Hence, it follows that the point-wise convergence in $\eta$ is also achieved, which implies the $L_{\infty}$ convergence in $\eta$. In bounded spaces (which is the case for $ \eta $-space) this also implies the $L_2$ convergence in $\eta$, which proves the asymptotic $L_2$ convergence in $\left(\xi, \eta \right)$-space. 
 
\end{proof}

\subsection{Coordinate transformation}

Here we explain the coordinate transformation that we mentioned in Section II, B. The system given by \eqref{IBVP} will be rewritten as a set of parametrised 1D equations \eqref{IBVP_0}. 

In \eqref{Jacobian} the rotation matrix is given by

\begin{equation}\label{Rotation_mat}
R_{\theta}(x,y) = 
\begin{pmatrix}
\cos \left( \theta(x,y) \right) &  \sin \left( \theta (x,y) \right) \\
- \sin \left( \theta (x,y) \right) & \cos \left( \theta(x,y) \right)
\end{pmatrix},
\end{equation}
and $C_{\theta}(x,y)$ is a diagonal scaling matrix given by
\begin{equation}\label{Scaling_matrix}
C_{\theta}(x,y) = 
\begin{pmatrix}
\alpha(x,y) & 0 \\
0 & \beta(x,y)
\end{pmatrix}
\end{equation}
where $\alpha(x,y)$ and $\beta(x,y)$ are positive and bounded scaling parameters used to normalize the metric in $(\xi,\eta)$-space and to make it uniformly distributed in space, and they satisfy:
\begin{equation}\label{alpha}
-\sin \theta \frac{\partial \left(\ln \alpha \right)}{\partial x} + \cos \theta \frac{\partial \left( \ln \alpha \right)}{\partial y} = \cos \theta \frac{\partial \theta}{\partial x} + \sin \theta \frac{\partial \theta}{\partial y}
\end{equation}
and 
\begin{equation}\label{beta}
\cos \theta \frac{\partial \left(\ln \beta\right)}{\partial x} + \sin \theta \frac{\partial \left( \ln \beta \right)}{\partial y} = \sin \theta \frac{\partial \theta}{\partial x} - \cos \theta \frac{\partial \theta}{\partial y}
\end{equation}

These PDEs \eqref{alpha} and \eqref{beta} come from the invarince of the order of taking partial derivatives of $\xi$ and $\eta$ w.r.t. $x$ and $y$ to provide independence of the chosen integration path (see \cite{TUM19c} for more details), i.e.

\begin{equation*}
\frac{\partial}{\partial y} \left( \frac{\partial \xi (x,y)}{\partial x} \right) = \frac{\partial}{\partial x} \left( \frac{\partial \xi(x,y)}{\partial y} \right),
\end{equation*}
and 
\begin{equation*}
\frac{\partial}{\partial y} \left( \frac{\partial \eta (x,y)}{\partial x} \right) = \frac{\partial}{\partial x} \left( \frac{\partial \eta(x,y)}{\partial y} \right).
\end{equation*}

Note that $\alpha(x,y)$ and $\beta(x,y)$ are functions of the direction field $\vec{d}_\theta(x,y)$ only (we can compute them from the network geometry).

We can do this coordinate transformation, since the flow evolves only along lines of constant $\eta$ in $(\xi,\eta)$-space, as it was already shown in \cite{TUM19c}. In $(\xi,\eta)$-plane the flow evolves along the $\xi$ coordinates, which are tangent to the flow motion, while in the orthogonal direction of $\eta$ there is no motion. 

Let us rescale the functions as $\bar\rho = \rho/(\alpha\beta)$, $\bar\rho_{max} = \rho_{max}/(\alpha\beta)$, $\bar v_{max} = v_{max} \alpha$, $\bar{\phi}_{max} = \phi_{max}/\beta$, $\bar{S}_{out} = S_{out}/\beta$ and $\bar{D}_{in} = D_{in}/\beta$.
Using \eqref{Rotation_mat}, \eqref{Scaling_matrix} with \eqref{beta} and \eqref{alpha} we compute the divergence of flow as \cite{TUM19c}. Thus, the model in $(\xi,\eta)$-space for the case of curved integral lines reads $\forall (\xi,\eta,t) \in D \times \mathbb{R}^{+}$:
 
\begin{equation}\label{Final_LWR}
\left\{
\begin{aligned}
& \frac{\partial \bar\rho (\xi, \eta, t)}{\partial t} + \frac{\partial (\bar \Phi (\xi,\eta, \bar\rho) }{\partial \xi} = 0, \\ 
& \bar{\phi}_{in}(\eta, t) = \min \left( \bar{D}_{in} (\eta, t), \bar{S} \left( \bar{\rho} \left( \xi_{min} (\eta), \eta, t \right) \right) \right), \\
& \bar{\phi}_{out}(\eta, t) = \min \left( \bar{D} \left( \bar{\rho} \left( \xi_{max} (\eta), \eta, t \right) \right), \bar{S}_{out} \left(\eta, t \right) \right), \\
& \bar\rho(\xi,\eta,0) = \bar\rho_0 (\xi,\eta),
\end{aligned} 
\right.
\end{equation}
where the flow function $\bar{\Phi}(\xi,\eta,\bar \rho)$ is now a scalar: 
\begin{equation}\label{Greenshields_xi_eta}
\bar{\Phi}(\xi,\eta,\bar \rho) = \bar v_{max}(\xi,\eta) \bar{\rho}(\xi,\eta,t) \left( 1 - \frac{\bar{\rho}(\xi,\eta,t)}{\bar \rho_{max}(\xi,\eta)}\right). 
\end{equation}

Equation \eqref{Final_LWR} has absolutely the same structure as \eqref{IBVP_0} with the difference only in rescaled state and parameters of FD. Thus, the result of Theorem 1 can be directly applied to control the system \eqref{Final_LWR}, taking into account the rescaling procedure.

\section{Numerical example}


\subsection{Urban network}

\begin{figure*}
\begin{center}
\includegraphics[width=16.0cm]{./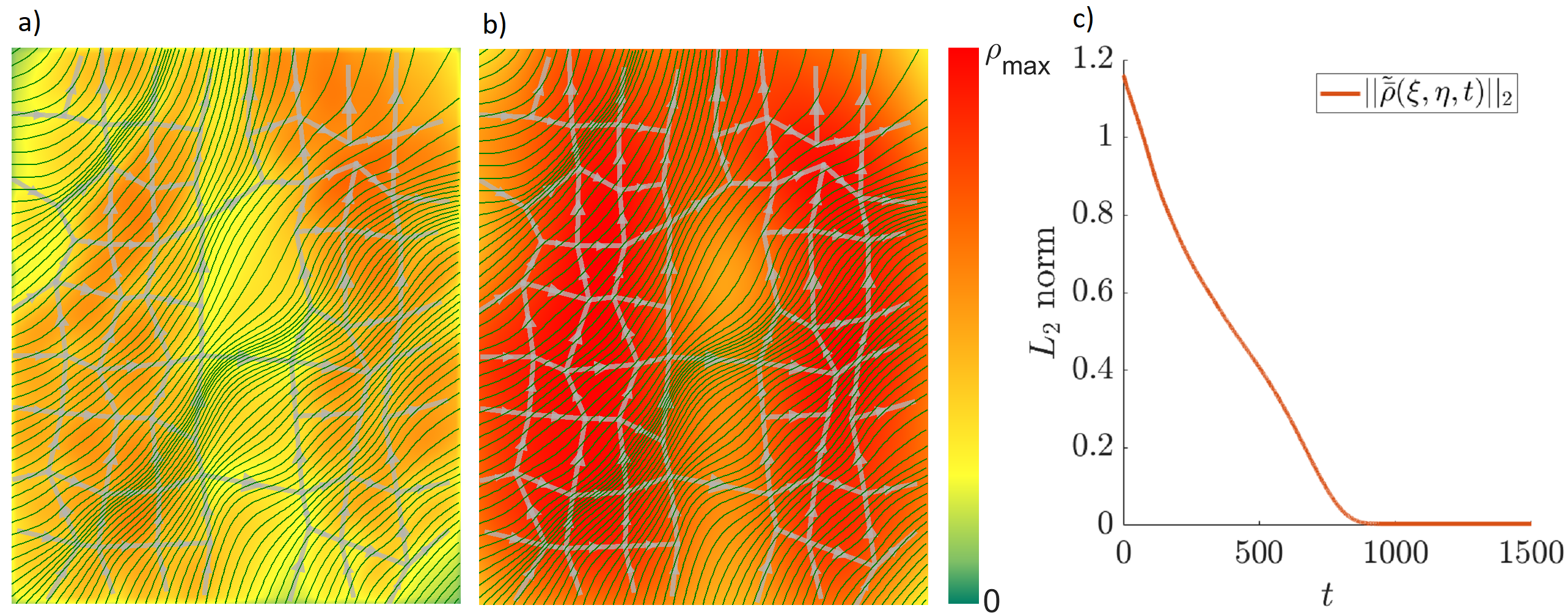}    
\caption{a) The desired state; b) initial congested network; c) the $L_2$ norm of the density error as a function of time.} 
\label{fig:Result}
\end{center}
\end{figure*}

As an example network we consider a $10 \times 10$ Manhattan grid of $1$ ${km}^2$. Positions of nodes (intersections) are slightly disordered with the white noise of standard deviation $20$ $m$ (denoted in grey on Figure \ref{fig:Result} in the middle and left). We assume that all roads are single-lane and are globally oriented towards the North-East direction. The network contains a topological bottleneck in the middle, e.g., a river with some bridges. The speed limits on most of the roads are set to $30$ $km/h$, and there are a few roads with $50$ $km/h$. 

For the computation of direction field $\vec{d}_{\theta}$ we chose such a sensitivity parameter $\beta = 20$ (see Section II, A) that the flux follows only the global trend of direction of all roads (North-East).

\subsection{Result}

Now we will consider the fully congested network with:
$$
\rho_0 (\xi, \eta) = \rho_{max} (\xi, \eta) \quad \forall (\xi, \eta) \in D.
$$

This state is also characterized by a maximal possible inflow, that the system can take, i.e. $\bar{\phi}_{in} (t) = \bar{\phi}_{max} \left( \xi_{min}, \eta, t \right)$. For the simulation, we first discretize the domain by $\eta$ and then implement the Godunov scheme for every constant $\eta$. For the outflow in the uncontrolled case we have created the so-called ghost cell. This means that for every $\eta$ we set the outflow equal to the flux on the previous cell of $\xi$. Thus, we obtain a congested scenario as illustrated in Fig. \ref{fig:Result}b.

The desired state is obtained by extracting from the minimal capacity along each $\eta$ minus $\epsilon = 10^{-5}$, which is depicted on Fig. \ref{fig:Result}a. Notice that the desired state flow is constant for every $\eta$ (steady-state), while the desired density is $\xi$-dependent. I

Finally, the fully congested state can be driven to the desired one by setting \eqref{Result} at the exit of each road of the network. Thereby, we observed that the spatial $L_2$-norm of the density error $\tilde{\bar{\rho}} \left( \xi, \eta, t \right)$ converges to zero as time goes to infinity, as it is shown in Fig. \ref{fig:Result}c.

\section{CONCLUSIONS}

In this paper we considered large-scale urban networks and designed the boundary control by actuating the supply at the domain's exit. The traffic state is given by the vehicle's density governed by the 2D-LWR model with space-dependent FD. Our designed controller is able to drive a heavily congested network with very large constant demand at the domain's entry to the $\epsilon$-optimal state w.r.t. the throughput maximization. We do this purely analytically relying only on the network topology. The topology is used to reconstruct the direction field, maximal velocities and road capacities on a continuous plane. The main trick was to perform the coordinate transformation allowing us to rewrite a 2D system as a set of 1D systems, and there to solve the control task.


\section*{ACKNOWLEDGEMENT}

The Scale-FreeBack project has received funding from the European Research Council (ERC) under the European Union’s Horizon 2020 research and innovation programme (grant agreement N 694209).

\bibliographystyle{IEEEtran}


\end{document}